\documentclass[article]{elsarticle}

\usepackage{hyperref}
\usepackage{geometry}
\usepackage{amsmath,amssymb,enumerate,amsfonts}

\usepackage[normalem]{ulem} 
\usepackage{longtable,threeparttable,float}
\usepackage[table]{xcolor}
\usepackage{graphicx}
\usepackage{caption}
\usepackage{listings}%
\usepackage{subcaption}
\usepackage{algorithm}%
\usepackage{algorithmicx}%
\usepackage{algpseudocode}%
\usepackage{epstopdf}
\usepackage{tabularx}
\usepackage[justification=centering]{caption}
\usepackage {color}
\usepackage{hyperref} 
\hypersetup{
	colorlinks=true,
	citecolor=blue,
	linkcolor=blue,
	urlcolor=green}

\usepackage{setspace}
{\begin{list}{}{\settowidth{\labelwidth}{\bf #1}%
			\setlength{\leftmargin}{\labelwidth}%
			\addtolength{\leftmargin}{\labelsep}%
			}}%
	{\end{list}}

\usepackage[figuresright]{rotating}

\linespread{1.0}
\geometry{a4paper,scale=0.85}
\captionsetup[Table]{labelfont={default}, labelformat={default}, labelsep=period, name={\bf{Table}}}
\captionsetup[figure]{labelfont={default}, labelformat={default}, labelsep=period, name={\textbf{Fig.}}}

\journal{Journal of \LaTeX\ Templates}


\newtheorem{definition}{Definition}[section]
\newtheorem{remark}{Remark}[section]

\newtheorem{theorem}{Theorem}[section]
\newtheorem{proposition}{Proposition}[section]
\newtheorem{lemma}{Lemma}[section]
\newtheorem{corollary}{Corollary}[section]
\newtheorem{assumption}{Assumption}[section]

\newenvironment{proof}{{\noindent \it Proof.}}{\hfill $\square$\par}










\begin{document}

\begin{frontmatter}

\title{Decomposition-Invariant Pairwise Frank-Wolfe Algorithm for Constrained Multiobjective Optimization}

\author[mymainaddress]{Zhuoxin Fan}
\ead{zxfan42@gmail.com}

\author[mymainaddress]{Liping Tang}
\ead{tanglps@163.com}

\address[mymainaddress]{National Center for Applied Mathematics in Chongqing, Chongqing Normal University, Chongqing, 401331, China}

\begin{abstract}
Recently the away-step Frank-Wolfe algoritm for constrained multiobjective optimization has been shown linear convergence rate over a polytope which is generated by finite points set. In this paper we design a decomposition-invariant pairwise frank-wolfe algorithm for multiobjective optimization that the feasible region is an arbitrary bounded polytope. We prove it has linear convergence rate of the whole sequence to a pareto optimal solution under strongly convexity without other assumptions.
\end{abstract}

\begin{keyword}
Multiobjective optimization \sep Frank-Wolfe method \sep Linear convergence rate \sep Decomposition-invariant
\end{keyword}

\end{frontmatter}

\section{Introduction}\label{sec:1}

Constrained multiobjective optimization problem has casued much concern. Many problems in practical applications can be expressed as it, such as in engineering design\cite{eschenauer2012multicriteria}, statics\cite{carrizosa1998dominating}, machine learning\cite{jin2007multi}.

Recently, The gradient-type method for solving the multiobjectvie optimization problem is popular. The first is steepest descent method\cite{fliege2000steepest} which is solving unconstrained multiobjectvie optimization problem. After that, projected gradient method\cite{drummond2004projected}, proximal point method\cite{tanabe2019proximal} and conditional gradient method\cite{assunccao2021conditional} have developed for solving constrained multiobjective optimization.

The conditional gradient method also named Frank-Wolfe method was first proposed by \cite{frank1956algorithm} for solving convex continuously differentiable optimization problems. And it has been extended to single-objective\cite{levitin1966constrained} and multiobjectvie optimization\cite{assunccao2021conditional}. Frank-Wolfe method solves a linear optimization problem in each iteration, so it has advantages of simplicity and ease of implementation compared to projected gradient method. But the disadvantange of Frank-Wolfe method is obvious: its convergence rate is $\mathcal{O}(1/k)$ and can not be improved for general constrained convex optimization problems\cite{jaggi2013revisiting}\cite{lan2013complexity}. So many improvements to this disadvantange have been proposed such as pairwise, away-step Frank-Wolfe method\cite{guelat1986some}\cite{lacoste2015global} and conditional gradient sliding algorithm\cite{lan2016conditional}. For constrained multiobjective optimization, \cite{gonccalves2024away} proposed the away-step Frank-Wolfe method. It attains linear convergence rate under certain assumptions. But the algorithm is to solve the problem that feasible region is a polytope generated by finite set of points. In this paper, we consider the feasible region is arbitrary polytopes. \cite{bashiri2017decomposition} and \cite{garber2016linear} have considered the single-objective optimization problems that these feasible regions are aribitary oplytopes. Combining with the ideas of them, we propose a new subproblem to calculate descent direction which we call pairwise direction. Under adaptive step size which is the natrual extension to multiobjective optimization, our algorithm also attain linear convergence rate when the objective functions have strongly convexity. Compared to the assumptions in away-step Frank-Wolfe algorithm for constrained multiobjective optimization, our assumptions do not need the additional assumption inequality (27) in \cite{gonccalves2024away} and is same as in \cite{lacoste2015global}.

The paper is organized as follows: Sect.\ref{sec:2} introduces some notations, definitions and lemmas we will use. Sect.\ref{sec:3} contains our main result: the decomposition-invariant pairwise Frank-Wolfe algorithm for multiobjective optimization. In Sect.\ref{sec:4} we proved asymptotic convergence and the algorithm has linear convergence rate when objective functions are strongly convex. Sect.\ref{sec:5} gives numberical experiments. Sect.\ref{sec:6} contains the conclusion of this paper.

\section{Preliminaries}\label{sec:2}
This section, we give some notations and definitions. If not otherwise specified, the norm of this paper is Eulidean norm. Let $[n]$ denotes $\{1,2,...,n\}$, $[m]:=\{1,...,m\},R^n_+:=\{u\in R^n:u_j\geq 0,j\in [m]\}$, and $R^n_{++}:=\{u\in R^n:u_j>0,j\in [m]\}$. For $u,v\in R^n,v\succeq u$ (or $u\preceq v$) means that $v-u\in R^n_+$ and $v\succ u$ (or $u\prec v$) means that $v-u\in R^n_{++}$. In general, a polytope $\mathcal{P}$ can be defined as
\begin{equation*}
	\mathcal{P}=\{x\in R^n:Ax\leq b,Cx=d\},
\end{equation*}
where $A=(a_1,...,a_{m_1})^\top\in R^{m_1\times n}$, $b=(b_1,...,b_{m_1})^\top\in R^{m_1}$, $C=(c_1,...,c_{m_2})^\top\in R^{m_2\times n}$ and $d=(d_1,...,d_{m_2})^\top\in R^{m_2}$. We denote the set of vertices of $\mathcal{P}$ is $\mathcal{V}$, the diameter of $\mathcal{P}$ is $D=\sup\{\Vert x-y\Vert:x,y\in\mathcal{P}\}$. In this paper we only concider $D<\infty$. 
We denote this constrained problem as 
\begin{eqnarray}
	\min &F(x)=(f_1(x),...,f_m(x))^\top\nonumber\\
	s.t. &Ax\leq b,\label{pro:1}\\
	&Cx=d.\nonumber
\end{eqnarray}
Throughout the paper we assume that $F: R^n\to R^m$ is a countinuously differentiable function given by $F :=(f_1,...,f_m)^T.$
\begin{definition}\normalfont
	A function $F:=(f_1,...,f_m)^T$ is $Lipschitz\ continuous$ with respect to $\vert\vert\cdot\vert\vert$, if there exist constants $L_1,...,L_m>0$ such that
	\begin{equation}
		f_j(y)\leq f_j(x)+\nabla f_j(x)\cdot (y-x)+\frac{L_j}{2}\vert\vert x-y\vert\vert^2,\ \forall x,y\in \mathcal{C},\ \forall j\in [m].
	\end{equation}
	For future reference we set $L:=\max\{L_j:j\in [m]\}.$
\end{definition}

\begin{definition}\normalfont
	A function $F(x)=(f_1(x),...,f_m(x))^\top$ is convex over a convex set $\mathcal{C}\subset R^n$ if for all $j\in [m]$ it satisfies the following equivalent condtions
	\begin{eqnarray}
		f_j(y)\leq f_j(x)+\langle\nabla f_j(x),y-x\rangle\ \forall x,y\in\mathcal{C}.
	\end{eqnarray}
\end{definition}
A point $x^*\in \mathcal{P}$ is called weak pareto optimal point for Problem (\ref{pro:1}), if there exists no other $x\in \mathcal{P}$ such that $F(x)\prec F(x^*)$. $\mathcal{W}^*$ denotes the set of weak pareto potimal point for Problem (\ref{pro:1}). For a face $\mathcal{F}$ of $\mathcal{P}$, define $\dim \mathcal{F}$ as:
\begin{equation*}
	\dim \mathcal{F}:=n-\dim \mathtt{span}\{\{a_1,...,a_{m_1}\}\cup \{c_i:1\leq i\leq m_2,c_i^\top x=d_i,\forall x\in \mathcal{F}\}\}.
\end{equation*}
Let $\mathcal{F}^*\subseteq \mathcal{P}$ is the lowest-dimensional face of $\mathcal{P}$ such that $\mathcal{W}^*\subseteq \mathcal{F}^*$. $\mathcal{F}^*$ can be written as $\mathcal{F}^*=\{x\in R^n:A^* x\leq b^*,C^* x=d^*\}$. The rows of $A^*$ are the rows of $A$ that correspond to inequality constraints that are satisfied by some of the points in $\mathcal{F}^*$ but not by others, and $b^*$ is defined accordingly. The rows of $C^*$ are the rows of $C$ plus the rows of $A$ that correspond to inequality constraints that are tight for all points in $\mathcal{F}^*$ and $d^*$ is defiend accordingly. 
Denote $LI(\mathcal{P})$ the set of all $\dim \mathcal{F}^*\times n$ matrices whose rows are linearly independent rows chosen from the rows of $C^*$. We define 
\begin{equation*}
	\phi^*=\max_{M\in LI(\mathcal{P})}\Vert M\Vert,\xi^*=\min_{v\in \mathcal{V}\cap \mathcal{F}^*}\min_i\{b(i)-A(i)^\top v|b(i)>A(i)^\top v\}.
\end{equation*}
$\phi^*$ and $\xi^*$ are constants for a polytope. 
For simplity, we define a useful function as
\begin{equation}
	\sigma(x,z):=\min_{j\in [m]}[f_j(x)-f_j(z)].
\end{equation}

\section{Algorithms}\label{sec:3}
The subproblem of classical frank-wolfe algorithm for multiobjective optimization\cite{assunccao2021conditional} in the k-th iteration which we named FW-subproblem is
\begin{equation}
	\min_{u\in \mathcal{P}}\max_{j\in [m]}\langle \nabla f_j(x^k),u-x^k\rangle.\label{fwsub}
\end{equation}
We denote the optimal solution and value of it by $p^{FW}(x^k)$ and $\theta^{FW}(x^k)$, which means
\begin{equation}
	p^{FW}(x^k)\in \arg\min_{u\in \mathcal{P}}\max_{j\in [m]}\langle \nabla f_j(x^k),u-x^k\rangle,
\end{equation}
and
\begin{equation}
	\theta^{FW}(x^k)= \min_{u\in \mathcal{P}}\max_{j\in [m]}\langle \nabla f_j(x^k),u-x^k\rangle.\label{difwsub}
\end{equation}
The following lemma which is in \cite{assunccao2021conditional} describle the property of $\theta^{FW}(x)$.
\begin{lemma}
	For $\theta^{FW}(x)$, $x\in \mathcal{P}$, the following conclusions hold:
	\begin{itemize}
		\item[(i)] $\theta^{FW}(x)\leq 0$;
		\item[(ii)] $\theta^{FW}(x)$ is continous;
		\item[(iii)] $\theta^{FW}(x^*)=0$ if and only if $x^*\in \mathcal{P}$ is a weak Pareto optimal point of Problem (\ref{pro:1}).
	\end{itemize}
	\label{lemma:1}
\end{lemma}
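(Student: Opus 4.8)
The plan is to establish the three items separately, in each case reducing the claim to elementary properties of the inner objective $g(x,u):=\max_{j\in[m]}\langle\nabla f_j(x),u-x\rangle$ together with the differentiability and convexity of $F$. The whole argument becomes self-contained once one observes that $g$ is jointly continuous and that $\mathcal{P}$ is a fixed compact set.

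For (i), I would simply test the inner minimization at the feasible point $u=x$. Since $g(x,x)=\max_{j\in[m]}\langle\nabla f_j(x),0\rangle=0$, the minimum over $u\in\mathcal{P}$ can only be smaller, giving $\theta^{FW}(x)=\min_{u\in\mathcal{P}}g(x,u)\le g(x,x)=0$.

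For (ii), I note that each map $x\mapsto\langle\nabla f_j(x),u-x\rangle$ is continuous because $F\in C^1$, so $g$ is continuous in $(x,u)$ as a finite maximum of continuous functions. Since the feasible set $\mathcal{P}$ is a fixed bounded polytope (hence compact and independent of $x$), the optimal-value function $\theta^{FW}(x)=\min_{u\in\mathcal{P}}g(x,u)$ is continuous by Berge's maximum theorem; equivalently one can bound $|\theta^{FW}(x)-\theta^{FW}(y)|$ directly using the uniform continuity of $g$ on compact sets.

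For (iii), I would prove the two implications in turn. First, if $x^*$ is weak Pareto optimal, then by (i) it remains to exclude $\theta^{FW}(x^*)<0$; were this the case there would exist $u\in\mathcal{P}$ with $\langle\nabla f_j(x^*),u-x^*\rangle<0$ for every $j$, so $d:=u-x^*$ is a common descent direction, and since $[m]$ is finite I can pick a single $t\in(0,1]$ small enough that $f_j(x^*+td)<f_j(x^*)$ holds simultaneously for all $j$; as $x^*+td=(1-t)x^*+tu\in\mathcal{P}$ by convexity of $\mathcal{P}$, this point dominates $x^*$ and contradicts weak Pareto optimality. Conversely, if $x^*$ is not weak Pareto optimal there is $x\in\mathcal{P}$ with $f_j(x)<f_j(x^*)$ for all $j$; convexity of each $f_j$ gives $\langle\nabla f_j(x^*),x-x^*\rangle\le f_j(x)-f_j(x^*)<0$, whence $\theta^{FW}(x^*)\le g(x^*,x)<0\ne0$, which is the contrapositive of $\theta^{FW}(x^*)=0\Rightarrow x^*\in\mathcal{W}^*$.

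The step I expect to require the most care is the simultaneous step-size choice in the forward implication of (iii): one has to exploit the finiteness of $[m]$ to extract a single $t$ producing descent in every objective at once, rather than a different threshold per objective. The continuity assertion (ii) is routine given that the feasible set does not vary with $x$, but stating the maximum-theorem hypotheses (or the uniform-continuity estimate) cleanly is the other place where precision matters.
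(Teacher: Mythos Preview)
Your argument is correct and is the standard one: testing $u=x$ for (i), Berge's maximum theorem on the fixed compact polytope for (ii), and for (iii) combining a common-descent-direction contradiction with the convexity inequality $\langle\nabla f_j(x^*),x-x^*\rangle\le f_j(x)-f_j(x^*)$. Note, however, that the paper does not supply its own proof of this lemma at all; it simply cites the result from Assun\c{c}\~ao et al.\ \cite{assunccao2021conditional}, so there is no in-paper argument to compare against---your proof is precisely what one finds in that reference (and you are right that convexity is needed for the implication $\theta^{FW}(x^*)=0\Rightarrow x^*\in\mathcal{W}^*$, which the paper is tacitly assuming).
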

The idea of away-step frank-wolfe algorithm \cite{guelat1986some}\cite{lacoste2015global} is to compute the classical diection and the away-step direction, then choose the better one. Our direction adopts the decompostion-invariant approach simillar with \cite{bashiri2017decomposition} and \cite{garber2016linear}. Our search space is restricted to the vertices that satisfy all the inequality constraints by equality if current $x^k$ does so:
\begin{eqnarray}
	& &\min_{u,v\in R^n}\max_{j\in [m]}\langle \nabla f_j(x^k),u-v\rangle,\nonumber\\
	&s.t.&Au\leq b,Cu=d\\
	& &Av\leq b,Cv=d,\ and\ \langle a_i,x^k\rangle=b_i\to \langle a_i,u\rangle=b_i,\forall i.\nonumber
\end{eqnarray}
The optimal value denotes $\theta^{PW}(x^k)$, the optimal solution denotes $(p^{PW}(x^k),q^{PW}(x^k))$. This question is the has the following proposition. Its proof is the same as property 1 in \cite{bashiri2017decomposition}. In fact, property 1 in \cite{bashiri2017decomposition} is a special situation.
\begin{proposition}
	Denote $\mathcal{S}(x):=\{S\subset \mathcal{P}:x =\sum^t_{i=1}\lambda_i s_i,s_i \in S\ and\ \lambda_i>0\}$. Then (8) is equivalent to
	\begin{equation}
		\min_{S\in \mathcal{S}(x)}\min_{u\in R^n,v\in S}\max_{j\in [m]}\langle \nabla f_j(x),u-v\rangle\ 
		s.t.Au\leq b,Cu=d.
	\end{equation}
	\label{prop:1}
\end{proposition}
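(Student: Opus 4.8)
The plan is to collapse both problems to a single minimization over the smallest face of $\mathcal{P}$ containing $x$, and then check that the two collapses agree. Write $I(x):=\{i:\langle a_i,x\rangle=b_i\}$ for the inequality constraints active at $x$, and let $\mathcal{F}(x):=\{w\in\mathcal{P}:\langle a_i,w\rangle=b_i,\ \forall i\in I(x)\}$ be the smallest face of $\mathcal{P}$ containing $x$, so that $x$ lies in the relative interior of $\mathcal{F}(x)$. The membership constraints place $u\in\mathcal{P}$ and $v\in\mathcal{P}$, and the implication in (8) further confines $v$ to $\mathcal{F}(x)$; hence (8) equals
\begin{equation*}
	\min_{u\in\mathcal{P}}\ \min_{v\in\mathcal{F}(x)}\ \max_{j\in[m]}\langle\nabla f_j(x),u-v\rangle .
\end{equation*}
Because the inner objective and the feasible set of $u$ never depend on the decomposition, the multiobjective $\max_{j}$ is inert throughout, which is why the reasoning coincides with Property 1 of \cite{bashiri2017decomposition} and why its single-objective statement is recovered by setting $m=1$.

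The crux is the set identity
\begin{equation*}
	\bigcup_{S\in\mathcal{S}(x)}S=\mathcal{F}(x),
\end{equation*}
i.e. a point $v\in\mathcal{P}$ occurs in some convex decomposition $x=\sum_{i=1}^t\lambda_i s_i$ with every $\lambda_i>0$ if and only if $v\in\mathcal{F}(x)$. For the inclusion $\subseteq$ I would average such a decomposition against an active constraint: for $\ell\in I(x)$, $b_\ell=\langle a_\ell,x\rangle=\sum_{i=1}^t\lambda_i\langle a_\ell,s_i\rangle\leq\big(\sum_{i=1}^t\lambda_i\big)b_\ell=b_\ell$, and since every $\lambda_i>0$ these inequalities are tight only if $\langle a_\ell,s_i\rangle=b_\ell$ for all $i$; thus each $s_i\in\mathcal{F}(x)$. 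For the reverse inclusion $\supseteq$, given $v\in\mathcal{F}(x)$ I would use that $x$ lies in the relative interior of $\mathcal{F}(x)$ to extend the segment from $v$ through $x$ to a second point $w\in\mathcal{F}(x)$, obtaining $x=\lambda v+(1-\lambda)w$ with $\lambda\in(0,1)$; then $S=\{v,w\}\in\mathcal{S}(x)$ and $v\in S$.

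Finally I would assemble the equivalence. Pulling the $u$-minimization inside and interchanging the order of the remaining minimizations, (9) reads $\min_{S\in\mathcal{S}(x)}\min_{v\in S}g(v)$ with $g(v):=\min_{u\in\mathcal{P}}\max_{j\in[m]}\langle\nabla f_j(x),u-v\rangle$; this equals $\min_{v\in\bigcup_{S}S}g(v)$, and substituting the set identity turns it into $\min_{v\in\mathcal{F}(x)}g(v)$, which is precisely the reduced form of (8) displayed above. The two problems therefore share the same optimal value, with corresponding minimizers. I expect the main obstacle to be the inclusion $\supseteq$: it is the only step that actually constructs a decomposition, and it hinges on the polytope fact that $x$ sits in the relative interior of its minimal face $\mathcal{F}(x)$, a geometric input worth isolating explicitly.
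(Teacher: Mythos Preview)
Your argument is correct and is precisely the minimal-face reduction that the paper has in mind: the paper does not supply its own proof but defers to Property~1 in \cite{bashiri2017decomposition}, whose content is exactly the identity $\bigcup_{S\in\mathcal{S}(x)}S=\mathcal{F}(x)$ that you establish and then plug into the nested minimization. Your reading of the active-set constraint as binding $v$ (not $u$) to $\mathcal{F}(x)$ is the intended one, and your handling of both inclusions---averaging against active constraints for $\subseteq$, extending through the relative interior for $\supseteq$---is the standard route.
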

We can get the relationship between $\theta^{PW}(x)$ and $\theta^{FW}(x)$ easily from Proposition (\ref{prop:1}).
\begin{corollary}
	For any $x$ which is feasible, we have $\theta^{PW}(x)\leq \theta^{FW}(x)$.
\end{corollary}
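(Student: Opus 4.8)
The plan is to read the inequality off directly from the reformulation in Proposition \ref{prop:1}, exploiting the fact that the index set $\mathcal{S}(x)$ over which the pairwise value is minimized always contains a trivial element. First I would record that, by Proposition \ref{prop:1},
\begin{equation*}
	\theta^{PW}(x)=\min_{S\in \mathcal{S}(x)}\ \min_{u\in \mathcal{P},\, v\in S}\ \max_{j\in [m]}\langle \nabla f_j(x),u-v\rangle ,
\end{equation*}
so that $\theta^{PW}(x)$ is an outer minimum over all admissible decomposition sets $S$. The point of using this reformulation (rather than problem (8) in its raw form) is that here it is the away-variable $v$ that is tied to a decomposition of $x$, while $u$ is free to range over all of $\mathcal{P}$; this is precisely the arrangement that lets the Frank--Wolfe subproblem reappear.

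The single key observation is that the singleton $S_0:=\{x\}$ is an admissible element of $\mathcal{S}(x)$. Indeed, since $x$ is feasible we have $x\in \mathcal{P}$, and the trivial representation $x=1\cdot x$ (that is, $t=1$, $\lambda_1=1$, $s_1=x$) shows $S_0\in \mathcal{S}(x)$. I would then evaluate the inner double minimization at this particular choice. Because $v$ is forced to equal $x$ whenever $v\in S_0=\{x\}$, the inner problem collapses to
\begin{equation*}
	\min_{u\in \mathcal{P}}\ \max_{j\in [m]}\langle \nabla f_j(x),u-x\rangle=\theta^{FW}(x),
\end{equation*}
which is exactly the defining expression (\ref{difwsub}) of the Frank--Wolfe value.

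To finish, since $\theta^{PW}(x)$ is the minimum of the inner value over all $S\in \mathcal{S}(x)$ and $S_0$ is merely one competitor, the minimum can be no larger than the value attained at $S_0$; this yields $\theta^{PW}(x)\le \theta^{FW}(x)$ and closes the argument.

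There is essentially no analytic obstacle here, and no estimates are required: the entire content is the correct interpretation of $\mathcal{S}(x)$. The only step deserving care is the verification that the definition of $\mathcal{S}(x)$ — a positive combination of points of $\mathcal{P}$ — does indeed permit the degenerate case $t=1$ with $s_1=x$, so that restricting $v$ to $S_0$ legitimately reproduces the FW subproblem rather than a strictly smaller feasible set. Once this admissibility is granted, the inequality is immediate.
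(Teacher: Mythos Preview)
Your argument is correct and is exactly the route the paper indicates: the corollary is stated as an immediate consequence of Proposition~\ref{prop:1}, and your choice $S_0=\{x\}\in\mathcal{S}(x)$ is the intended specialization that collapses the inner problem to the FW subproblem~(\ref{difwsub}). Nothing further is needed.
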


\begin{algorithm}[h]
	\caption{Decomposition-Invariant Pisewise Frank-Wolfe Algorithm for Multiobjective Optimization}
	\begin{algorithmic}[line]
		\State Initialization: $x_1\in \mathcal{P},\epsilon>0$.
		\For {$k=1,2,...$}
		\State calculate pisewise direction $d^{PW}_k$ as ().
		\If {$\|\theta^{FW}(x^k)\|\leq \epsilon$}
		\State Return $x^k$.
		\EndIf
		\State calculate $\lambda_{k,\max}$ such that $\lambda_{k,\max}=\max\{\lambda\geq 0:x^k+\lambda_k d^{PW}_k\in \mathcal{P}\} $.
		\State Choose the step size $\lambda_k \in[0,\lambda_{k,\max}]$.\EndFor
	\end{algorithmic}
	\label{Algo:1}
\end{algorithm}
In Algorithm \ref{Algo:1}, the choice of step size is different from classical Frank-Wolfe algorithm. It is beacuse the algorithm should ensure that $x^{k+1}$ remains in the feasible region. No general solution is known to this difficulty. Our approach is doing line search or adaptive step size $\lambda_k =\min\{\lambda_{k,\max},\frac{-\theta^{PW}(x^k)}{L\Vert d^{PW}(x^k)\Vert}\}$.
\begin{algorithm}
	\caption{Adaptive Step Size}
	\begin{algorithmic}[line]
		\State Input:$F(x)$, $x\in \mathcal{C}$ and descent direction $d(x)$, smoothness parameter $L>0$,step size parameter $\lambda_{max}$ and progress parameters $\eta<1<\tau$.
		\State $M=\eta L,\lambda=\min\{\lambda_{\max},\frac{-\max_{j\in [m]}\langle \nabla f_j(x),d(x)\rangle}{M\Vert d(x)\Vert^2}\}$.
		\While{$$\{\max_{j\in [m]}(f_j(x+\lambda d(x))-f_j(x))
			\geq \lambda\max_{j\in [m]}\langle \nabla f_j(x),d(x)\rangle+\frac{\lambda^2 M}{2}\Vert d(x)\Vert^2\} $$}
		
		\State $M=\tau M,\lambda=\min\{\lambda_{\max},\frac{-\max_{j\in [m]}\langle \nabla f_j(x),d(x)\rangle}{M\Vert d(x)\Vert^2}\}$.
		\EndWhile
		\State Ouput: updated smoothness parameter $L=M$ and step size $\lambda$.
	\end{algorithmic}
	\label{Algo:2}
\end{algorithm}
Algorithm \ref{Algo:2} is a generalized form of adaptive step size in \cite{pedregosa2020linearly}. We use 
\begin{equation}
	\max_{j\in [m]}(f_j(x+\lambda d(x))-f_j(x))
	\geq \lambda\max_{j\in [m]}\langle \nabla f_j(x),d(x)\rangle+\frac{\lambda^2 M}{2}\Vert d(x)\Vert^2
\end{equation}
instead of
\begin{equation}
	\{(f_j(x+\lambda d(x))-f_j(x))\geq \lambda\langle \nabla f_j(x),d(x)\rangle+\frac{\lambda^2 M}{2}\Vert d(x)\Vert^2\},\forall j\in [m].
\end{equation}
Obviously, the first is weaker than the last inequality. And Algorithm \ref{Algo:2} using either of these two ensures the following holds
\begin{equation*}
	\max_{j\in [m]}(f_j(x+\lambda d(x))-f_j(x))
	\leq \lambda\max_{j\in [m]}\langle \nabla f_j(x),d(x)\rangle+\frac{\lambda^2 L}{2}\Vert d(x)\Vert^2,
\end{equation*}
where $L,\lambda$ are output of Algorithm \ref{Algo:2}. Hence we choose the first inequality.
\section{Convergence Analysis}\label{sec:4}
In this section, we discuss the convergence analysis of Algorithm \ref{Algo:1}. Firstly, we discuss asymptotic convergence of Algorithm \ref{Algo:1}. Then we prove Algorithm \ref{Algo:1} has $\mathcal{O}(1/k)$ for general constraint multiobjective convex optimization problem. However, if the objective function satisfies \eqref{assu:1} which is weaker than strongly convexity, we can get linear convergence rate.
\subsection{Asymptotic Convergence}
We present the asymptotic convergence analysis of Algorithm \ref{Algo:1} in this subsection. In Theorem \ref{theo:1}, we have proved every limit point of $\{x^k:k\in \mathbb{N}\}$ generated by Algorithm \ref{Algo:1} is a weak pareto point of Problem (\ref{pro:1}). If the objective function is strongly convex, every limit point of $\{x^k:k\in \mathbb{N}\}$ is a pareto point of Problem (\ref{pro:1}). 
\begin{lemma}
	Let $\{x^k:k\in \mathbb{N}\}$ be generated by Algorithm \ref{Algo:1}, we have
	\begin{equation}
		f_j(x^{k+1})\leq f_j(x^k)+\lambda_k\theta^{PW}(x^k)+\frac{L}{2}\lambda_k^2\Vert d^{PW}(x^k)\Vert^2,\forall j\in [m].
	\end{equation}
	\label{lemma:2}
\end{lemma}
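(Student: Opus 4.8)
The plan is to derive the bound directly from the $L_j$-smoothness inequality of Definition 2.1 (the ``Lipschitz continuous'' property), evaluated along the update step, and then to relax the two resulting per-index quantities to the global constants $\theta^{PW}(x^k)$ and $L$.

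First I would record the form of the iterate. By construction in Algorithm \ref{Algo:1}, $x^{k+1} = x^k + \lambda_k d^{PW}(x^k)$, where $d^{PW}(x^k) = p^{PW}(x^k) - q^{PW}(x^k)$ is the pairwise direction and, by the definition of subproblem (8), $\theta^{PW}(x^k) = \max_{j\in[m]}\langle \nabla f_j(x^k), d^{PW}(x^k)\rangle$. Since $\lambda_k \in [0,\lambda_{k,\max}]$, the step-size rule guarantees $x^{k+1}\in\mathcal{P}$, so both $x^k$ and $x^{k+1}$ lie in the domain on which the smoothness inequality is valid.

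Next, for each fixed $j\in[m]$, I would apply Definition 2.1 with $x=x^k$ and $y=x^{k+1}$, so that $y-x=\lambda_k d^{PW}(x^k)$. This yields
\begin{equation*}
	f_j(x^{k+1}) \leq f_j(x^k) + \lambda_k \langle \nabla f_j(x^k), d^{PW}(x^k)\rangle + \frac{L_j}{2}\lambda_k^2 \Vert d^{PW}(x^k)\Vert^2.
\end{equation*}
I would then bound the last two terms from above using the two defining quantities: the inner product is dominated by its maximum over $j$, i.e. $\langle \nabla f_j(x^k), d^{PW}(x^k)\rangle \leq \theta^{PW}(x^k)$, while $L_j \leq L := \max_i L_i$ (the quadratic term being nonnegative). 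Substituting both relaxations produces exactly the claimed inequality, valid for every $j\in[m]$.

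There is no genuine obstacle here; the argument is a single application of the descent/smoothness estimate followed by two elementary bounds. The only point requiring care is conceptual rather than technical: the conclusion is asserted for each individual $j$, not merely for $\max_{j}$, so it is essential to keep the smoothness inequality at the per-index level and replace only $\langle \nabla f_j(x^k), d^{PW}(x^k)\rangle$ by its maximum $\theta^{PW}(x^k)$ and $L_j$ by $L$. In particular, the coarser guarantee produced by Algorithm \ref{Algo:2}, which controls $\max_{j}(f_j(x^{k+1})-f_j(x^k))$ rather than each coordinate, would not by itself deliver the per-$j$ statement, so I would base the proof on the smoothness definition directly.
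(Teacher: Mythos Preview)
Your proposal is correct and follows essentially the same argument as the paper: apply the per-coordinate smoothness inequality along the step $x^{k+1}=x^k+\lambda_k d^{PW}(x^k)$, then bound $\langle\nabla f_j(x^k),d^{PW}(x^k)\rangle$ by $\max_{j}\langle\nabla f_j(x^k),d^{PW}(x^k)\rangle=\theta^{PW}(x^k)$ and $L_j$ by $L$. The paper's proof is the same three-line chain, only it starts directly with $L$ rather than first writing $L_j$.
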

\begin{proof}
	Since $F(x)=(f_1(x),...,f_m(x))^\top$ is Lipschitz continuous with $L$ on $\mathcal{P}$, we have
	\begin{eqnarray*}
		f_j(x^{k+1})&\leq& f_j(x^k)+\lambda_k\langle\nabla f_j(x^k),d^{PW}(x^k)\rangle+\frac{L}{2}\lambda_k^2\Vert d^{PW}(x^k)\Vert^2\\
		&\leq&f_j(x^k)+\lambda_k\max_{j\in [m]}\langle\nabla f_j(x^k),d^{PW}(x^k)\rangle+\frac{L}{2}\lambda_k^2\Vert d^{PW}(x^k)\Vert^2\\
		&\leq&f_j(x^k)+\lambda_k\theta^{PW}(x^k)+\frac{L}{2}\lambda_k^2\Vert d^{PW}(x^k)\Vert^2.
	\end{eqnarray*}
\end{proof}
\begin{remark}\normalfont
	From the Lemma \label{lemma:2} and the choice of step size $\lambda_k$, if none of the inequality constraints are satisfied as equality at the optimal $\lambda_k$ of line search, we call it a good step, otherwise we call it a bad step. For a bad step, we can only have that $f_j(x^{k+1})\leq f_j(x^k),\forall j\in [m]$. Fortunately we can bound the number of bad steps between two good steps.\\
	First, if $\lambda_{k,\max}=1$ it is a good step. We know that if $d^{PW}(x^k)=x^k$, $\lambda_{k,\max}=1$ holds. If $\lambda_{k,\max}<1$, at least one inequality constraint will turn into an equality constraint when doing line search. Algorithm \ref{Algo:1} selects the search direction $d^{PW}(x^k)$ by respecting all equality constraints, so succession of the search direction $d^{PW}(x^k)$ will terminate when the set of equalities only define a singleton $x^k$. Since $x\in R^n$, it is obviously that the number of bad steps $bad(\mathcal{P})$ between two good steps is less than $n$.
	\label{remark:1}
\end{remark}
\begin{theorem}
	Every limit point of $\{x^k:k\in \mathbb{N}\}$ generated by Algorithm \ref{Algo:1} is a weak Pareto optimal point of Problem (\ref{pro:1}).
	\label{theo:1}
\end{theorem}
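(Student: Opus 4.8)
The plan is to combine the sufficient-decrease estimate of Lemma~\ref{lemma:2} with the stationarity characterization of Lemma~\ref{lemma:1} and the bad-step bound of Remark~\ref{remark:1}, and then to finish by exploiting the fact that weak Pareto optimality of a feasible point depends only on its objective value. First I would show each coordinate objective is non-increasing. Writing $q_k(\lambda):=\lambda\theta^{PW}(x^k)+\tfrac{L}{2}\lambda^2\Vert d^{PW}(x^k)\Vert^2$, whose negative roots are $0$ and $-2\theta^{PW}(x^k)/(L\Vert d^{PW}(x^k)\Vert^2)$, the step-size rule keeps $\lambda_k$ in the interval on which $q_k\le 0$, so Lemma~\ref{lemma:2} yields $f_j(x^{k+1})\le f_j(x^k)$ for every $j\in[m]$ and every $k$. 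Since $\mathcal{P}$ is compact and each $f_j$ is continuous, every sequence $\{f_j(x^k)\}$ is bounded below and hence converges to some $\bar f_j$; by continuity this forces $F(\bar x)=\bar f:=(\bar f_1,\dots,\bar f_m)^\top$ at every limit point $\bar x$ of $\{x^k\}$, a fact I will use decisively at the end.

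Next I would extract vanishing stationarity along the good steps. At a good step $\lambda_k$ equals the unconstrained minimizer $-\theta^{PW}(x^k)/(L\Vert d^{PW}(x^k)\Vert^2)$ of $q_k$, so Lemma~\ref{lemma:2} gives the sufficient decrease $f_j(x^k)-f_j(x^{k+1})\ge (\theta^{PW}(x^k))^2/(2L\Vert d^{PW}(x^k)\Vert^2)\ge (\theta^{PW}(x^k))^2/(2LD^2)$, using $\Vert d^{PW}(x^k)\Vert\le D$. Summing over the good steps and using convergence of $\{f_j(x^k)\}$ shows the right-hand side is summable, so $\theta^{PW}(x^k)\to 0$ along good steps; since $\theta^{PW}(x)\le\theta^{FW}(x)\le 0$ by Lemma~\ref{lemma:1}(i) and the corollary to Proposition~\ref{prop:1}, also $\theta^{FW}(x^k)\to 0$ along good steps. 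By Remark~\ref{remark:1} at most $n$ bad steps separate consecutive good steps, so for a non-terminating run there are infinitely many good steps; compactness of $\mathcal{P}$ then lets me choose a convergent subsequence of good iterates $x^{k_l}\to x^*$. Continuity of $\theta^{FW}$ (Lemma~\ref{lemma:1}(ii)) gives $\theta^{FW}(x^*)=0$, so $x^*$ is weak Pareto optimal by Lemma~\ref{lemma:1}(iii), and $F(x^*)=\bar f$.

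Finally I would upgrade this single weak Pareto point to all limit points. The main obstacle is precisely that an arbitrary limit point $\bar x$ may be approached only through bad steps, where no sufficient decrease is available, so pushing continuity of $\theta^{FW}$ directly through such a subsequence is not justified. I would bypass this entirely: weak Pareto optimality of a feasible point $z$, namely the nonexistence of $x\in\mathcal{P}$ with $F(x)\prec F(z)$, depends on $z$ only through $F(z)$. Every limit point $\bar x\in\mathcal{P}$ satisfies $F(\bar x)=\bar f=F(x^*)$, and since $x^*$ is weak Pareto optimal there is no $x\in\mathcal{P}$ with $F(x)\prec F(x^*)=F(\bar x)$; hence $\bar x$ is weak Pareto optimal as well. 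This settles every limit point and is, in my view, the cleanest route around the bad-step difficulty, since it replaces a fragile continuity argument for $\theta^{FW}$ by the value-invariance of weak Pareto optimality together with the monotone convergence of each $f_j$.
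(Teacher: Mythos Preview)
Your proposal is correct and follows essentially the same route as the paper: extract the good-step subsequence, use the sufficient-decrease estimate plus $\Vert d^{PW}(x^k)\Vert\le D$ to force $\theta^{FW}(x^{k_i})\to 0$, invoke continuity of $\theta^{FW}$ to get a weak Pareto limit along that subsequence, and then pass to all limit points via the common value $\bar f$ of $\{F(x^k)\}$. Your final paragraph makes explicit the value-invariance argument that the paper compresses into a single sentence (``$\{F(x^k)\}$ converges to $F(\bar x)$, hence any limit point is a weak Pareto point''), so the logic is identical but your write-up is cleaner on that point; the only cosmetic slip is the phrase ``negative roots'' where you mean the roots of $q_k$, which are $0$ and a nonnegative number.
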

\begin{proof}
	From the Remark \ref{remark:1}, we know there exists a subsequence of $\{x^k:k\in \mathbb{N}\}$ such that every in it is a good step. assume that the indices of subsequence is $K=\{k_0,k_1,...\}\subset \mathbb{N}$. Since $\mathcal{P}$ is compact and $x^k\in \mathcal{P}$ for all $k\in K$, we assume without loss of generality that $\{x^k:k\in K\}$ converge to $\bar{x}\in\mathcal{P}$. From the definition of $\theta^{FW}(x^k)$ and $\theta^{PW}(x^k)$, we have $\theta^{PW}(x^k)\leq \theta^{FW}(x^k)<0$, $\Vert d^{PW}(x^k)\Vert\leq D$ where $D$ that is the diameter of $\mathcal{P}$. From Lemma \label{lemma:2}, we have
	\begin{equation*}
		f_j(x^{k_{i+1}})\leq f_j(x^{k_i})+\lambda_{k_i}\theta^{PW}(x^{k_i})+\frac{L}{2}\lambda_{k_i}^2\Vert d^{PW}(x^{k_i})\Vert^2,\forall j\in [m],k_i\in K.
	\end{equation*}
	Since it is good step when $k_i\in K$, we have
	\begin{equation*}
		f_j(x^{k_{i+1}})\leq f_j(x^{k_i})-\frac{\theta^{PW}(x^{k_i})^2}{2L\Vert d^{PW}(X^{k_i})\Vert^2}, \forall j\in [m].
	\end{equation*}
	It follows that
	\begin{equation*}
		\frac{\theta^{FW}(x^{k_i})^2}{2LD^2}\leq\frac{\theta^{PW}(x^{k_i})^2}{2LD^2}\leq f_j(x^{k_i})-f_j(x^{k_{i+1}}),\forall j\in [m].
	\end{equation*}
	But the sequence $\{F(x^k):k\in \mathbb{N}\}$ is monotonically decreaing, we have that the whole sequence $\{F(x^k):k\in \mathbb{N}\}$ converges to some point in $R^m$. It is because $\{x^k:k\in \mathbb{N}\}\subset \mathcal{P}$, $\mathcal{P}$ is compact and $F(x)$ is Lipschitz continuous.
	So we have that
	\begin{equation*}
		\lim_{i\to \infty}\frac{\theta^{FW}(x^{k_i})^2}{2L\Vert d^{FW}(x^{k_i})\Vert^2}=0.
	\end{equation*}
	From Lemma \ref{lemma:1}, $\theta^{FW}(\bar{x})=0$ means that $\bar{x}$ is a weak Pareto optimal point for Problem (\ref{pro:1}). And $\{F(x^k):k\in \mathbb{N}\}$ converges to $F(\bar{x})$, hence any limit point $x^*\in \mathcal{P}$ of $\{x^k:k\in \mathbb{N}\}$ is a weak Pareto point for Problem (\ref{pro:1}).
\end{proof}
\begin{corollary}
	If $F(x)$ is $\mu$-strongly convex, then $\{x^k:k\in \mathbb{N}\}$ converges to a Pareto optimal point of Problem (\ref{pro:1}).
\end{corollary}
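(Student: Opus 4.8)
The plan is to strengthen Theorem \ref{theo:1} under $\mu$-strong convexity in two respects: to promote weak Pareto optimality of every limit point to genuine Pareto optimality, and to show that the whole sequence $\{x^k\}$ (not merely a subsequence) converges to a single point. Both facts rest on one elementary inequality. For each $j\in[m]$, $\mu$-strong convexity of $f_j$ gives, for any $x,y\in\mathcal{P}$ and $\lambda\in[0,1]$,
\begin{equation*}
	f_j(\lambda x+(1-\lambda)y)\leq \lambda f_j(x)+(1-\lambda)f_j(y)-\tfrac{\mu}{2}\lambda(1-\lambda)\Vert x-y\Vert^2.
\end{equation*}
Taking $\lambda=\tfrac12$ and using convexity of $\mathcal{P}$, so that the midpoint $z=\tfrac{x+y}{2}$ is feasible, yields $f_j(z)\leq \tfrac12 f_j(x)+\tfrac12 f_j(y)-\tfrac{\mu}{8}\Vert x-y\Vert^2$ for every $j$.

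First I would recall from the proof of Theorem \ref{theo:1} that the sequence $\{F(x^k):k\in\mathbb{N}\}$ is componentwise nonincreasing and, since $F$ is continuous on the compact set $\mathcal{P}$, converges to some $F^*\in R^m$; moreover every limit point $\bar{x}$ of $\{x^k\}$ is weak Pareto optimal and, by continuity, satisfies $F(\bar{x})=F^*$. I would then argue that such an $\bar{x}$ is in fact Pareto optimal. If not, there is $y\in\mathcal{P}$ with $F(y)\preceq F(\bar{x})$ and $F(y)\neq F(\bar{x})$; the latter forces $y\neq\bar{x}$. Applying the midpoint inequality to $\bar{x}$ and $y$ gives, for every $j$,
\begin{equation*}
	f_j\!\left(\tfrac{\bar{x}+y}{2}\right)\leq f_j(\bar{x})-\tfrac{\mu}{8}\Vert \bar{x}-y\Vert^2<f_j(\bar{x}),
\end{equation*}
hence $F\!\left(\tfrac{\bar{x}+y}{2}\right)\prec F(\bar{x})$ with $\tfrac{\bar{x}+y}{2}\in\mathcal{P}$, contradicting weak Pareto optimality of $\bar{x}$.

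Next I would prove that the whole sequence converges. If $\bar{x}$ and $\tilde{x}$ are any two limit points, then $F(\bar{x})=F(\tilde{x})=F^*$. Were $\bar{x}\neq\tilde{x}$, the same midpoint inequality would give $f_j\!\left(\tfrac{\bar{x}+\tilde{x}}{2}\right)\leq F^*_j-\tfrac{\mu}{8}\Vert \bar{x}-\tilde{x}\Vert^2<F^*_j$ for all $j$, so the feasible midpoint would strictly dominate $\bar{x}$, again contradicting its weak Pareto optimality. Therefore $\{x^k\}$ has a unique limit point; since $\{x^k\}\subset\mathcal{P}$ and $\mathcal{P}$ is compact, the entire sequence converges to that point, which by the previous paragraph is Pareto optimal.

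The main obstacle is conceptual rather than computational: recognizing that strong convexity collapses the level set $\{x\in\mathcal{P}:F(x)=F^*\}$ to the single weak Pareto point, which simultaneously forces uniqueness of the limit and upgrades weak optimality to optimality. Once the midpoint inequality is established, the remaining deductions are immediate, so the delicate point is simply to phrase the contradiction correctly in the vector-valued setting and to invoke continuity of $F$ to transfer the limit of $\{F(x^k)\}$ to the value at each limit point.
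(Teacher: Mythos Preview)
Your proof is correct and complete. It differs from the paper's argument in a genuine way, so a short comparison is worth making.

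The paper exploits the \emph{first-order} consequence of weak Pareto optimality: since any limit point $\bar{x}$ is Pareto critical (i.e., $\theta^{FW}(\bar{x})=0$), one has $\max_{j\in[m]}\langle\nabla f_j(\bar{x}),x^k-\bar{x}\rangle\geq 0$ for every $k$. Combining this with the gradient form of strong convexity, $f_j(x^k)-f_j(\bar{x})\geq \langle\nabla f_j(\bar{x}),x^k-\bar{x}\rangle+\tfrac{\mu}{2}\Vert x^k-\bar{x}\Vert^2$, yields the quantitative bound
\[
\Vert x^k-\bar{x}\Vert^2\leq \tfrac{2}{\mu}\max_{j\in[m]}\bigl(f_j(x^k)-f_j(\bar{x})\bigr),
\]
and convergence of $\{x^k\}$ to $\bar{x}$ follows directly from $F(x^k)\to F(\bar{x})$. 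Pareto optimality is then asserted from the standard fact that weak Pareto plus strong convexity gives Pareto.

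Your route is purely \emph{zero-order}: you never invoke criticality or any gradient inequality at $\bar{x}$, relying instead on the midpoint inequality and the fact that all limit points share the same objective value $F^*$. This is slightly more elementary and makes the upgrade from weak Pareto to Pareto fully explicit. What the paper's approach buys, on the other hand, is the explicit estimate above, which ties the distance $\Vert x^k-\bar{x}\Vert$ to the optimality gap and is the kind of inequality one reuses in rate analyses. Either argument is perfectly adequate for the corollary as stated.
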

\begin{proof}
	From Theorem \ref{theo:1}, we know that a limit point $\bar{x}$ of $x^*\in \mathcal{P}$ is a weak Pareto point for Problem (\ref{pro:1}), it is also a critial point. So we have
	\begin{equation*}
		\max_{j\in [m]}\langle \nabla f_j(\bar{x}),x^k-\bar{x}\rangle\geq 0, \forall j\in [m].
	\end{equation*}
	Since $F(x)$ is $\mu$-strongly convex, we have
	\begin{equation*}
		f_j(x^{k})-f_j(\bar{x})\geq\langle \nabla f_j(\bar{x}),x^k-\bar{x}\rangle+\frac{\mu}{2}\Vert\bar{x}-x^k\Vert^2,\forall j\in [m],k\in\mathbb{N}.
	\end{equation*}
	It follows that
	\begin{equation*}
		\max_{j\in [m]}\{f_j(x^k)-f_j(\bar{x})\}\geq \max_{j\in [m]}\langle \nabla f_j(\bar{x}),x^k-\bar{x}\rangle+\frac{\mu}{2}\Vert\bar{x}-x^k\Vert^2\geq \frac{\mu}{2}\Vert\bar{x}-x^k\Vert^2.
	\end{equation*}
	From the Theorem \ref{theo:1}, we know that $F(x^k)$ converges to a point that we conclude it as $F^*=(f_1^*,...,f_m^*)^\top$. Hence we have that $\{x^k:k\in\mathbb{N}\}$ converges to $\bar{x}$. Since $F(x)$ is strongly convex and $\bar{x}$ is weak Pareto point for Problem (\ref{pro:1}), it follows that $\bar{x}$ is a Pareto point for Problem (\ref{pro:1}).
\end{proof}
\subsection{Linear Convergence}
We prove Algorithm \ref{Algo:1} has $\mathcal{O}(1/k)$ when the objective function is general convex function. Lemma \label{lemma:5} gives the upper bound of $\theta^{PW}(x^k)$. Then we attains linear convergence rate when the objective function satisfies \eqref{assu:1}. 
\begin{assumption}\normalfont
	Let $\{x^k:k\in \mathbb{N}\}$ generated by Algorithm \ref{Algo:1} converges to a pareto optimal point $x^*$. There exists $\mu>0$ such that for all $j\in [m]$, $f_j(x)$ we have
	\begin{equation}
		\langle \nabla f_j(x^k),x^*-x^k\rangle\leq -\Vert x^k-x^*\Vert \sqrt{2\mu(f_j(x^k)-f_j(x^*))},\forall j\in [m].\label{assu:1}
	\end{equation}
\end{assumption}
\begin{remark}\normalfont
	\label{remark:2}
	When $f_j(x),j\in [m]$ are strongly convex with $\mu_j>0$, \eqref{assu:1} is satisfied with $\mu=\max\{\mu_j,j\in [m]\}$.
\end{remark}
The following lemma states any $y\in \mathcal{P}$ can be expressed as points of $\mathcal{V}$ and satisfies a useful proprety which is weaker than Lemma 5.3 of \cite{garber2016linearly}.
\begin{lemma}
	\label{lemma:3}
	Suppose $x$ can be written as some convex combination of $s$ number of vertices of $\mathcal{P}$: $x=\sum_{i=1}^s\lambda_i v_i$, where $\lambda_i\geq 0,\sum_{i=1}^s \lambda_i=1$ and $v_i\in \mathcal{V}$. Then any $y\in\mathcal{P}$ can be written as $y=\sum_{i=1}^{s}(\lambda_i-\Delta_i)v_i+\Delta z$, such that $z\in\mathcal{P}$, $\Delta_i\in[0,\lambda_i],\Delta=\sum_{i=1}^{s}\Delta_i$. And for any $i\in[s]$ such that $\Delta_i>0$, there exists at least an index $j_i\in[s_1]$ that satisfies $a_{j_i}^\top z=b_{j_i}$ and $a_{j_i}^\top v_i<b_{j_i}$.
\end{lemma}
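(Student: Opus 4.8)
The plan is to recast the statement as a pure existence result about ``transport plans'' and then single out an extremal one. First I would observe that producing the claimed decomposition is equivalent to finding $z\in\mathcal{P}$ and scalars $\Delta_i\in[0,\lambda_i]$, with $\Delta=\sum_i\Delta_i$, satisfying the single identity $\sum_{i=1}^s(\lambda_i-\Delta_i)v_i+\Delta z=y$. Since $\sum_i(\lambda_i-\Delta_i)+\Delta=1$ and every coefficient is nonnegative, this is automatically a convex combination, so the genuine requirements are only $z\in\mathcal{P}$ and the face property. The feasible set of such tuples is nonempty — the trivial choice $z=y$, $\Delta_i=\lambda_i$ works, as $\sum_i(\lambda_i-\lambda_i)v_i+1\cdot y=y$ — and, read in the variables $(\Delta_1,\dots,\Delta_s,z)$, it is closed and bounded, hence compact (the defining identity is continuous, even though bilinear through the term $\Delta z$). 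The case $y=x$ is handled by taking all $\Delta_i=0$, so I assume $y\neq x$, which forces $\Delta>0$ at every feasible tuple.

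The key idea is to choose, among all feasible transport plans, one that \emph{minimizes the total moved mass} $\Delta=\sum_i\Delta_i$; a minimizer exists by compactness. I claim such a minimizer automatically satisfies the face condition, and this is where the whole argument lives. Suppose not: some $i_0$ has $\Delta_{i_0}>0$ yet fails the condition, i.e.\ every inequality constraint tight at $z$ is also tight at $v_{i_0}$. Writing $J(z)$ for the indices of the constraints tight at $z$, the standard polytope fact that a point lies in the relative interior of its minimal face $\mathcal{F}_z=\{w\in\mathcal{P}:a_j^\top w=b_j\ \forall j\in J(z)\}$ translates this failure into $v_{i_0}\in\mathcal{F}_z$ while $z\in\operatorname{relint}\mathcal{F}_z$.

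I would then contradict minimality by exhibiting a feasible plan with strictly smaller $\Delta$. Lowering $\Delta_{i_0}$ to $\Delta_{i_0}-\epsilon$ while fixing the other $\Delta_i$ forces, through the identity, the new hub $z'=z+\tfrac{\epsilon}{\Delta-\epsilon}(z-v_{i_0})$. Because $z$ is in the relative interior of $\mathcal{F}_z$ and $v_{i_0}\in\mathcal{F}_z$, the direction $z-v_{i_0}$ lies in the affine hull of $\mathcal{F}_z$, so $z'$ remains in $\mathcal{F}_z\subseteq\mathcal{P}$ for all small $\epsilon>0$; meanwhile $\Delta_{i_0}-\epsilon\in[0,\lambda_{i_0}]$ and the new total $\Delta-\epsilon<\Delta$, the contradiction. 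The degenerate sub-case $v_{i_0}=z$, where the perturbation direction vanishes, is easier still: one deletes the pointless self-transport and again lowers $\Delta$. Hence the minimizer obeys the face condition, producing the required $z$, $\{\Delta_i\}$ and $\Delta$.

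The main obstacle is exactly this face condition, not the bookkeeping of the decomposition, since naive constructions fail. On the unit square with $x$ the centre expressed through its two diagonal vertices and $y$ the midpoint of an edge, the proportional ``shoot a ray from $x$ through $y$'' recipe sends all mass to $z=y$, and then the edge's other diagonal vertex sits on the unique facet tight at $z$, violating the requirement; only by moving strictly less mass — driving $z$ out to a true vertex of that edge — is the condition recovered. This is precisely the behaviour the minimal-$\Delta$ selection enforces, so the extremal principle together with the relative-interior perturbation is the crux of the proof.
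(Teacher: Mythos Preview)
Your proof is correct and follows the same strategy as the paper's: minimize the total transported mass $\Delta=\sum_i\Delta_i$ over all admissible decompositions, then argue by contradiction that any index $i_0$ with $\Delta_{i_0}>0$ violating the face condition would permit a strictly smaller $\Delta$. Your treatment is considerably more complete than the paper's sketch---in particular, you make explicit the relative-interior argument showing that the perturbed hub $z'=z+\tfrac{\epsilon}{\Delta-\epsilon}(z-v_{i_0})$ remains in $\mathcal{P}$, which the paper compresses to the single line ``otherwise we can find $\Delta_i'<\Delta_i$ and satisfies conditions.''
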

\begin{proof}
	Consider the following optimization problem:
	\begin{align*}
		&\min \Delta=\sum_{i=1}^{s}\Delta_i\\
		s.t.&\  \frac{\Delta_i}{\lambda_i}= \delta\in[0,1],\forall i\in[s]\\
		&z=\frac{1}{\Delta}(y-\sum_{i=1}^{s}(\lambda_i-\Delta_i)v_i)\in\mathcal{P}\\
		&\max_{j\in [m]}\langle\nabla f_j(x),q-x\rangle\leq\max_{j\in [m]}\langle\nabla f_j(x),y-x\rangle,
	\end{align*}
	where $q=\sum_{i=1}^{s}(\lambda_i-\Delta_i)v_i+\sum_{i=1}^{s}p$. Since when $\Delta_i=\lambda,\forall i\in[s]$ we have $z=y$. It means that the feasible region of the problem is not empty and the optimal value of the problem is less than 1. Also $\Delta\geq 0$ means that the optimal value is nonnegative. There exists $i\in[s]$ such that $\Delta_i>0$, and an index $j_i\in[s_1]$ that satisfies $a_{j_i}^\top z=b_{j_i}$ and $a_{j_i}^\top v_i<b_{j_i}$. Otherwise we can find $\Delta_i'<\Delta_i$ and satisfies conditions.
\end{proof}
\begin{lemma}
	\label{lemma:4}
	Let $x\in \mathcal{P}$ and write $x$ as a convex combination of points in $\mathcal{P}$,i.e., $x=\sum_{i=1}^s\lambda_i v_i$ such that $\lambda_i>0$ for all $i=1,...,s$. Let $y\in \mathcal{P}$. $y$ can be written as a convex combination $y=\sum_{i=1}^s(\lambda_i -\Delta_i)v_i+\sum_{i=1}^s\Delta_i z$ for some $z\in \mathcal{P},\Delta_i\in [0,\lambda_i]$. Let $\Delta=\sum_{i=1}^s\Delta_i$ is minimized. We have
	\begin{equation}
		\sum_{i=1}^{s}\Delta_i\leq \frac{m_1\phi^{*}}{\xi^{*}}\Vert y-x\Vert.
	\end{equation}
\end{lemma}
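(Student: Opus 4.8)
The plan is to reduce the claim to a single scalar lower bound and then to extract that bound from the geometric information supplied by Lemma \ref{lemma:3}. First I would dispose of the trivial case: if $\Delta=\sum_{i=1}^s\Delta_i=0$, then $y=\sum_{i=1}^s\lambda_i v_i=x$, so $\Vert y-x\Vert=0$ and the inequality holds. Hence assume $\Delta>0$ and set $\bar v:=\frac{1}{\Delta}\sum_{i=1}^s\Delta_i v_i$, a convex combination of the $v_i$. The key algebraic identity is
$$y-x=\sum_{i=1}^s(\lambda_i-\Delta_i)v_i+\Delta z-\sum_{i=1}^s\lambda_i v_i=\Delta(z-\bar v),$$
so that $\Delta=\Vert y-x\Vert/\Vert z-\bar v\Vert$. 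Thus it suffices to prove the lower bound $\Vert z-\bar v\Vert\ge \xi^*/(m_1\phi^*)$, which immediately yields $\Delta\le\frac{m_1\phi^*}{\xi^*}\Vert y-x\Vert$.

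Next I would exploit Lemma \ref{lemma:3}. For every index $i$ with $\Delta_i>0$ it provides a constraint index $j_i\in[m_1]$ with $a_{j_i}^\top z=b_{j_i}$ and $a_{j_i}^\top v_i<b_{j_i}$. I partition the active indices $\{i:\Delta_i>0\}$ according to the chosen $j_i$, writing $I_j:=\{i:\Delta_i>0,\ j_i=j\}$ and $\Delta^{(j)}:=\sum_{i\in I_j}\Delta_i$, so that $\sum_{j}\Delta^{(j)}=\Delta$ with at most $m_1$ nonempty classes. By pigeonhole there is an index $j^*$ with $\Delta^{(j^*)}\ge \Delta/m_1$. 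The crucial sign observation is that, because $a_{j^*}^\top z=b_{j^*}$ while $a_{j^*}^\top v_i\le b_{j^*}$ for every $v_i\in\mathcal{P}$, each term $b_{j^*}-a_{j^*}^\top v_i$ is nonnegative; moreover for $i\in I_{j^*}$ the quantity $b_{j^*}-a_{j^*}^\top v_i$ is a positive vertex slack and is therefore at least $\xi^*$.

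Projecting the direction $z-\bar v$ onto the row $a_{j^*}$ then gives
$$a_{j^*}^\top(z-\bar v)=\frac{1}{\Delta}\sum_{i=1}^s\Delta_i(b_{j^*}-a_{j^*}^\top v_i)\ge \frac{1}{\Delta}\sum_{i\in I_{j^*}}\Delta_i\,\xi^*=\frac{\Delta^{(j^*)}}{\Delta}\xi^*\ge\frac{\xi^*}{m_1},$$
where the first inequality discards the nonnegative terms with $i\notin I_{j^*}$. Applying Cauchy--Schwarz, $a_{j^*}^\top(z-\bar v)\le\Vert a_{j^*}\Vert\,\Vert z-\bar v\Vert$, and bounding $\Vert a_{j^*}\Vert\le\phi^*$ through the definition of $\phi^*$ over $LI(\mathcal{P})$, I obtain $\Vert z-\bar v\Vert\ge \xi^*/(m_1\phi^*)$, which by the reduction in the first paragraph completes the argument.

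The step I expect to require the most care is matching the two polytope constants to the quantities that actually appear. Concretely, one must justify that every positive vertex slack $b_{j}-a_{j}^\top v_i$ is bounded below by $\xi^*$ and that the single tight row $a_{j^*}$ satisfies $\Vert a_{j^*}\Vert\le\phi^*$; both hinge on the precise roles of $A^*$, $C^*$ and $LI(\mathcal{P})$ in the definitions of $\xi^*$ and $\phi^*$, and in particular on verifying that the relevant rows and vertices are controlled by the face $\mathcal{F}^*$. Everything else is the elementary decomposition algebra together with the pigeonhole count.
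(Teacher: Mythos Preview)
Your proof is correct and follows the same overall strategy as the paper: invoke Lemma~\ref{lemma:3} to obtain, for each $i$ with $\Delta_i>0$, a tight row $a_{j_i}$ with strict slack at $v_i$, then project $y-x=\sum_i\Delta_i(z-v_i)$ onto such rows and bound the result using the polytope constants $\phi^*$ and $\xi^*$. The only difference is in the bookkeeping of that projection. You pigeonhole onto a single row $a_{j^*}$ carrying weight at least $\Delta/m_1$ and use Cauchy--Schwarz once, which yields exactly the stated factor $m_1$. The paper instead collects \emph{all} relevant rows into a submatrix $A_z$, lower-bounds $\Vert y-x\Vert$ by $\Vert A_z(y-x)\Vert/\Vert A_z\Vert$, and then applies the QM--AM inequality to the vector of row-projections; this actually produces the sharper constant $\sqrt{m_1}$, though the lemma is stated with $m_1$. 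Your single-row argument is more elementary and avoids needing the submatrix to sit inside $LI(\mathcal{P})$, while the paper's matrix version gives a tighter bound at the cost of a slightly heavier norm inequality. The concern you flag in the last paragraph---that the rows and vertices used must be controlled by $\mathcal{F}^*$ for $\phi^*$ and $\xi^*$ to apply---is legitimate, and the paper's proof glosses over the identical issue.
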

\begin{proof}
	From Lemma \ref{lemma:3}, we know that exists $i \in[s]$ and $j_i$ such that $a_{j_i}^\top z=b_{j_i}$ and $a_{j_i}^\top v_i<b_{j_i}$. Let $C^*(z)\subset [m_1]$ be a set of minimal cardinality such that i) for all $j\in C^*(z)$, $a_j^\top z=b_j$, ii) for all $i\in [s]$ there exists $j_i\in C^*(z)$ for which $a_{j_i}^\top v_i<b_{j_i}$. Let $A_z\in R^{|C^*(z)|\times n}$ be the matrix $A$ that deletes each row $j\notin C^*(z)$. We have that
	\begin{eqnarray*}
		\Vert y-x\Vert^2\geq& \frac{1}{\Vert A_z\Vert^2}\Vert A_z(y-x)\Vert^2\\
		=&\frac{1}{\Vert A_z\Vert^2}\Vert A_z\sum_{i=1}^s \Delta_i(z-v_i)\Vert^2\\
		\geq& \frac{1}{\phi^{*2}}\sum_{j\in C^*(z)}(\sum_{i=1}^s \Delta_i(b_j-a_j^\top v_i))^2\\
		\geq& \frac{1}{\phi^{*2}\vert C^*(z)\vert}(\sum_{j\in C^*(z)}\sum_{i=1}^s \Delta_i(b_j-a_j^\top v_i))^2\\
		\geq& \frac{1}{\phi^{*2}\vert C^*(z)\vert}(\sum_{i=1}^s \Delta_i\xi^*)^2\\
		\geq& \frac{\xi^{*2}}{m_1\phi^{*2}}(\sum_{i=1}^s \Delta_i)^2.
	\end{eqnarray*}
	Hence we have
	\begin{equation*}
		\sum_{i=1}^{s}\Delta_i\leq \frac{m_1\phi^{*}}{\xi^{*}}\Vert y-x\Vert.
	\end{equation*}
\end{proof}
\begin{lemma}
	Let $\{x^k,k\in \mathbb{N}\}$ is generated by Algorithm \ref{Algo:1}. Let $(p^{PW}(x^k),q^{PW}(x^k))$ is the optimal solution of Problem (\ref{pro:1}) at $x^k$. Then for any $y\in \mathcal{P}$ we have
	\begin{eqnarray}
		\frac{\max_{j\in [m]}\langle \nabla f_j(x^k),y-x^k\rangle}{\Vert y-x^k\Vert}&\geq&\frac{m_1\phi^{*}}{\xi^{*}}\max_{j\in [m]}\langle \nabla f_j(x^k),p^{PW}(x^k)-q^{PW}(x^k)\rangle\nonumber\\
		&=&\frac{m_1\phi^{*}}{\xi^{*}}\theta^{PW}(x^k).
	\end{eqnarray}
	\label{lemma:5}
\end{lemma}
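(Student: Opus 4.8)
The plan is to reduce the arbitrary point $y$ to a single scaled pairwise direction and then invoke the minimality defining $\theta^{PW}(x^k)$. First I would dispose of the trivial case. We may assume $y\neq x^k$, so $\Vert y-x^k\Vert>0$. If $\max_{j\in[m]}\langle\nabla f_j(x^k),y-x^k\rangle\geq 0$, then by Lemma~\ref{lemma:1}(i) together with the corollary to Proposition~\ref{prop:1} we have $\theta^{PW}(x^k)\leq\theta^{FW}(x^k)\leq 0$, so the left-hand side is nonnegative while the right-hand side is nonpositive and the inequality holds. Hence the only genuine case is $\max_{j\in[m]}\langle\nabla f_j(x^k),y-x^k\rangle<0$.

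Next I would write $x^k=\sum_{i=1}^s\lambda_i v_i$ with $\lambda_i>0$, $v_i\in\mathcal{V}$, and apply Lemma~\ref{lemma:4} to the pair $(x^k,y)$. This yields $z\in\mathcal{P}$ and coefficients $\Delta_i\in[0,\lambda_i]$ with $\Delta:=\sum_{i=1}^s\Delta_i\leq\frac{m_1\phi^*}{\xi^*}\Vert y-x^k\Vert$ such that $y=\sum_{i=1}^s(\lambda_i-\Delta_i)v_i+\Delta z$. Subtracting $x^k$ gives $y-x^k=\Delta z-\sum_{i=1}^s\Delta_i v_i=\Delta(z-\bar v)$, where $\bar v:=\frac{1}{\Delta}\sum_{i=1}^s\Delta_i v_i$ is a convex combination of the support vertices (and $\Delta>0$ since $y\neq x^k$).

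The crux is to show $\max_{j\in[m]}\langle\nabla f_j(x^k),z-\bar v\rangle\geq\theta^{PW}(x^k)$, which I would obtain by checking that $(z,\bar v)$ is feasible for the pairwise subproblem defining $\theta^{PW}(x^k)$. Indeed $z\in\mathcal{P}$, and $\bar v\in\mathcal{P}$ as a convex combination of vertices; moreover every inequality active at $x^k$ is active at $\bar v$: if $\langle a_i,x^k\rangle=b_i$ then, since $x^k=\sum_l\lambda_l v_l$ with $\lambda_l>0$ and each $\langle a_i,v_l\rangle\leq b_i$, convexity forces $\langle a_i,v_l\rangle=b_i$ for all $l$, whence $\langle a_i,\bar v\rangle=b_i$. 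As $\theta^{PW}(x^k)$ is the minimum of $\max_{j\in[m]}\langle\nabla f_j(x^k),u-v\rangle$ over feasible pairs, feasibility of $(z,\bar v)$ gives the claimed bound, and therefore
\[
\max_{j\in[m]}\langle\nabla f_j(x^k),y-x^k\rangle=\Delta\max_{j\in[m]}\langle\nabla f_j(x^k),z-\bar v\rangle\geq\Delta\,\theta^{PW}(x^k).
\]

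Finally I would combine signs. Since $\theta^{PW}(x^k)\leq 0$ and $0\leq\Delta\leq\frac{m_1\phi^*}{\xi^*}\Vert y-x^k\Vert$, we get $\Delta\,\theta^{PW}(x^k)\geq\frac{m_1\phi^*}{\xi^*}\Vert y-x^k\Vert\,\theta^{PW}(x^k)$; dividing by $\Vert y-x^k\Vert>0$ yields the stated inequality, and the trailing equality is merely the definition $\theta^{PW}(x^k)=\max_{j\in[m]}\langle\nabla f_j(x^k),p^{PW}(x^k)-q^{PW}(x^k)\rangle$. I expect the third step to be the main obstacle: one \emph{cannot} argue termwise that $\max_{j}\langle\nabla f_j(x^k),z-v_i\rangle\geq\theta^{PW}(x^k)$ for each $i$ and then sum, because the outer maximum over $j$ does not distribute over $\sum_i\Delta_i(z-v_i)$ (the maximizing index may differ across $i$). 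Collapsing the entire increment into the single scaled direction $\Delta(z-\bar v)$ and invoking feasibility of $(z,\bar v)$ is exactly what circumvents this non-interchangeability.
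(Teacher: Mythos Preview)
Your argument is correct and follows the same route as the paper: apply Lemma~\ref{lemma:4} to write $y-x^k=\Delta(z-\bar v)$ with $\bar v$ equal to the paper's $x^{\Delta^k}$, then bound $\max_{j}\langle\nabla f_j(x^k),z-\bar v\rangle\geq\theta^{PW}(x^k)$ by feasibility of $(z,\bar v)$ and combine with the bound on $\Delta$. Your version is in fact a bit cleaner, since you verify explicitly that $\bar v$ inherits the active inequalities of $x^k$ and handle the sign of $\theta^{PW}(x^k)$ when passing from $\Delta$ to $\frac{m_1\phi^*}{\xi^*}\Vert y-x^k\Vert$, and you bypass the paper's auxiliary point $t^k$ and intermediate minimizer $p(x^k,x^{\Delta^k})$, which become unnecessary once $(z,\bar v)$ is seen to be feasible for the pairwise subproblem.
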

\begin{proof}
	$x^k$ can write the convex decomposition as $x^k=\sum_{i=1}^s \lambda_i v_i,\{v_i,i\in [s]\}\subset \mathcal{V}$. 
	Let $\Delta^k=\sum_{i=1}^s\Delta_i$. From Lemma \ref{lemma:4}, we know that $y=\sum_{i=1}^{s}(\lambda_i-\Delta_i) v_i+\Delta^k z$, where $z\in \mathcal{P}$, and $\sum_{i=1}^{s}\Delta_i\leq \frac{m_1\phi^{*}}{\xi^{*}}\Vert y-x^k\Vert$. Let $x^{\Delta^k}=\frac{1}{\Delta^k}\sum_{i=1}^{s}\Delta_i v_i\in\mathcal{P}$. Concider
	\begin{equation*}
		t^k=\sum_{i=1}^{s}(\lambda_i-\Delta_i) v_i+\Delta^k p(x^k,x^{\Delta^k}),
	\end{equation*}
	where $p(x^k,x^{\Delta^k})$ is the optimal solution of the following question:
	\begin{equation*}
		\min_{u\in \mathcal{P}}\max_{j\in [m]}\langle \nabla f_j(x^k),u-x^{\Delta^k}\rangle.
	\end{equation*}
	Since $p(x^k,x^{\Delta^k})-x^{\Delta^k}=\frac{1}{\Delta^k}(t^k-x^k)$ and $z-x^{\Delta^k}=\frac{1}{\Delta^k}(y-x^k)$, we have
	\begin{eqnarray*}
		\frac{1}{\Delta^k}\max_{j\in [m]}\langle \nabla f_j(x^k),t^k-x^k\rangle&=&\max_{j\in [m]}\langle \nabla f_j(x^k),p(x^k,x^{\Delta^k})-x^{\Delta^k}\rangle\\
		&\leq&\max_{j\in [m]}\langle \nabla f_j(x^k),z-x^{\Delta^k}\rangle\\
		&=&\frac{1}{\Delta^k}\max_{j\in [m]}\langle \nabla f_j(x^k),y-x^k\rangle.
	\end{eqnarray*}
	From $\Delta^k> 0$ we have that $\max_{j\in [m]}\langle \nabla f_j(x^k),t^k-x^k\rangle\geq\max_{j\in [m]}\langle \nabla f_j(x^k),y-x^k\rangle$. Now we can establish the relationship between $y$ and $(p^{PW}(x^k),q^{PW}(x^k))$.
	\begin{eqnarray*}
		\max_{j\in [m]}\langle \nabla f_j(x^k),y-x^k\rangle&\geq&\max_{j\in [m]}\langle \nabla f_j(x^k),t^k-x^k\rangle\\
		&=&\max_{j\in [m]}\langle \nabla f_j(x^k),\Delta^k p(x^k,x^{\Delta^k})-\sum_{i=1}^{s}\Delta_i v_i\rangle\\
		&=&\Delta^k\max_{j\in [m]}\langle \nabla f_j(x^k),p(x^k,x^{\Delta^k})-\frac{1}{\Delta^k}\sum_{i=1}^{s}\Delta_i v_i\rangle\\
		&\geq& \Delta^k\max_{j\in [m]}\langle \nabla f_j(x^k),p^{PW}(x^k)-q^{PW}(x^k)\rangle=\Delta^k\theta^{PW}(x^k).
	\end{eqnarray*}
	Combining the inequality and Lemma \ref{lemma:4}, we get the conclusion.
\end{proof}
\begin{remark}\normalfont
	\label{remark:3}
	Let $y=x^*$ which is a limit point of $\{x^k:k\in\mathbb{N}\}$ generated by Algorithm \ref{Algo:1}, and from Lemma \ref{lemma:5} we build the relation between $\theta^{PW}(x^k)$ and $\max_{j\in [m]}\langle \nabla f_j(x^k),x^*-x^k\rangle$. Acturally the coefficient $\frac{m_1\phi^{*}}{\xi^{*}}$ of $\theta^{PW}(x^k)$ is a constant with respect to $\mathcal{P}$. When $m=1$ this result degraded to the single-objective optimization. Then the facial distance in \cite{pena2019polytope} or the pyramidal width in \cite{lacoste2015global} can be applied to replace.
\end{remark}
\begin{theorem}
	Let $\{x^k:k\in \mathbb{N}\}$ be generated by Algorithm \ref{Algo:1} and $x^*$ is a limit point. Denote $\sigma(x^k,x^*) = \min_{j\in [m]}\{f_j(x^k)-f_j(x^*)\}$. Then we have
	\begin{equation}
		\sigma(x^k,x^*)=\mathcal{O}(\beta D^2/k), \forall k\geq 1.\label{eqtheo:1}
	\end{equation}
	Assume that $F(x)$ satisfies  \eqref{assu:1}, we have
	\begin{equation}
		\sigma(x^{k+1},x^*)\leq (1-\frac{2\mu\xi^{*2}}{2LD^2 m_1^2\phi^{*2}})\sigma(x^k,x^*).\label{eqtheo:2}
	\end{equation}
\end{theorem}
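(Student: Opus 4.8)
The plan is to treat the two assertions in sequence, since both rest on the same per-step descent estimate coming from Lemma \ref{lemma:2} together with the adaptive step size, and differ only in how the pairwise gap $\theta^{PW}(x^k)$ is bounded from below. Throughout I restrict attention to good steps in the sense of Remark \ref{remark:1}: on such a step the line-search/adaptive rule takes $\lambda_k = -\theta^{PW}(x^k)/(L\Vert d^{PW}(x^k)\Vert^2)$ strictly inside $[0,\lambda_{k,\max}]$, and substituting this into Lemma \ref{lemma:2} while bounding $\Vert d^{PW}(x^k)\Vert\le D$ gives, for every $j\in[m]$,
\[
f_j(x^{k+1}) \le f_j(x^k) - \frac{\theta^{PW}(x^k)^2}{2LD^2}.
\]
Evaluating this at the index $j_1$ realising $\sigma(x^k,x^*)=\min_j(f_j(x^k)-f_j(x^*))$ and using $\sigma(x^{k+1},x^*)\le f_{j_1}(x^{k+1})-f_{j_1}(x^*)$ produces the master recursion $\sigma(x^{k+1},x^*)\le\sigma(x^k,x^*)-\theta^{PW}(x^k)^2/(2LD^2)$. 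Since by Remark \ref{remark:1} there are at most $n$ bad steps between two good ones and bad steps only keep $\sigma$ nonincreasing, it suffices to iterate the recursion along good steps and absorb the bounded gaps into the constants.

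For the sublinear bound \eqref{eqtheo:1} I would lower bound $|\theta^{PW}|$ by convexity. Since $x^*\in\mathcal{P}$, convexity of each $f_j$ gives $\langle\nabla f_j(x^k),x^*-x^k\rangle\le f_j(x^*)-f_j(x^k)$; taking the maximum over $j$, recalling the definition of $\theta^{FW}$ and the Corollary $\theta^{PW}(x^k)\le\theta^{FW}(x^k)$, one gets $\theta^{PW}(x^k)\le\theta^{FW}(x^k)\le-\sigma(x^k,x^*)\le0$, hence $\theta^{PW}(x^k)^2\ge\sigma(x^k,x^*)^2$. Writing $h_k=\sigma(x^k,x^*)$, the master recursion becomes $h_{k+1}\le h_k-h_k^2/(2LD^2)$, and the standard manipulation (divide by $h_kh_{k+1}$, use monotonicity $h_{k+1}\le h_k$, then telescope $1/h_{k+1}-1/h_k\ge 1/(2LD^2)$) yields $h_k=\mathcal{O}(LD^2/k)$, i.e.\ \eqref{eqtheo:1} with $\beta$ of order $L$.

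For the linear rate \eqref{eqtheo:2} the improvement is a square-root-free lower bound on $|\theta^{PW}|$ obtained from Lemma \ref{lemma:5} and Assumption \eqref{assu:1}. Setting $y=x^*$ in Lemma \ref{lemma:5} gives $\theta^{PW}(x^k)\le\frac{\xi^*}{m_1\phi^*}\,\frac{\max_j\langle\nabla f_j(x^k),x^*-x^k\rangle}{\Vert x^*-x^k\Vert}$. The crucial observation is that \eqref{assu:1} is a \emph{uniform} bound: for every $j$ one has $\langle\nabla f_j(x^k),x^*-x^k\rangle\le-\Vert x^k-x^*\Vert\sqrt{2\mu(f_j(x^k)-f_j(x^*))}\le-\Vert x^k-x^*\Vert\sqrt{2\mu\,\sigma(x^k,x^*)}$, where the last step uses $f_j(x^k)-f_j(x^*)\ge\sigma(x^k,x^*)\ge0$ (valid because $F(x^k)$ decreases monotonically to $F(x^*)$). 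As this holds for each $j$ it holds for the maximising index, so $\max_j\langle\nabla f_j(x^k),x^*-x^k\rangle\le-\Vert x^k-x^*\Vert\sqrt{2\mu\,\sigma(x^k,x^*)}$. Cancelling $\Vert x^k-x^*\Vert$ gives $\theta^{PW}(x^k)\le-\frac{\xi^*}{m_1\phi^*}\sqrt{2\mu\,\sigma(x^k,x^*)}$, hence $\theta^{PW}(x^k)^2\ge\frac{2\mu\xi^{*2}}{m_1^2\phi^{*2}}\sigma(x^k,x^*)$, and feeding this into the master recursion produces exactly $\sigma(x^{k+1},x^*)\le\bigl(1-\tfrac{2\mu\xi^{*2}}{2LD^2m_1^2\phi^{*2}}\bigr)\sigma(x^k,x^*)$.

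I expect the main obstacle to be the bookkeeping rather than the inequalities themselves: one must (i) apply the descent estimate at the index realising $\sigma(x^k,x^*)$ so that the recursion closes in $\sigma$ rather than in an arbitrary coordinate gap, and (ii) be honest that the literal per-iteration contraction \eqref{eqtheo:2} is an along-good-steps statement, with the finitely many bad steps of Remark \ref{remark:1} handled by monotonicity so the whole sequence still converges linearly. The uniform nature of \eqref{assu:1} is precisely what makes the otherwise delicate passage of the maximum through the square root harmless.
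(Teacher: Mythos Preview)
Your proposal is correct and follows essentially the same route as the paper. For \eqref{eqtheo:2} your argument is identical to the paper's: apply Lemma~\ref{lemma:5} with $y=x^*$, use the uniform bound \eqref{assu:1} to control $\max_j\langle\nabla f_j(x^k),x^*-x^k\rangle$, cancel $\Vert x^k-x^*\Vert$, and feed the resulting lower bound on $\theta^{PW}(x^k)^2$ into the good-step descent estimate. For \eqref{eqtheo:1} the paper merely says the result follows ``immediately'' from the proof of Theorem~\ref{theo:1} and Remark~\ref{remark:1}; your explicit use of convexity to obtain $\theta^{PW}(x^k)\le\theta^{FW}(x^k)\le-\sigma(x^k,x^*)$ and the standard $1/h_{k+1}-1/h_k$ telescoping is exactly the argument that is being invoked there, so you are simply writing out what the paper leaves implicit.
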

\begin{proof}
	Comining the proof of Theorem \ref{theo:1} and Remark \ref{remark:1}, \eqref{eqtheo:1} is obtained immediately. Now we prove \eqref{eqtheo:2} holds if \eqref{assu:1} is satisfied. \\
	First, we concider a good step. From \eqref{assu:1} of $F(x)$ and Lemma \ref{lemma:5} we have that
	\begin{eqnarray*}
		\theta^{PW}(x^k)&\leq&\frac{\xi^{*}}{m_1\phi^{*}\Vert x^*-x^k\Vert}\max_{j\in [m]}\langle \nabla f_j(x^k),x^*-x^k\rangle\\
		&\leq& -\frac{\sqrt{2\mu}\xi^{*}}{m_1\phi^{*}}\sqrt{\sigma(x^k,x^*)}.
	\end{eqnarray*}
	For a good step, the step size $\lambda_k=-\frac{\theta^{PW}(x^k)}{L\Vert d^{PW}(x^k)\Vert^2} <\lambda_{k,max}$. Accroding the smoothess of $F(x)$ we have that
	\begin{eqnarray*}
		f_j(x^{k+1})&\leq& f_j(x^k)-\lambda_k\langle \nabla f_j(x^k),d^{PW}(x^k)\rangle+\frac{L}{2}\lambda_k^2\Vert d^{PW}(x^k)\Vert^2\\
		&\leq& f_j(x^k)-\lambda_k\theta^{PW}(x^k)+\frac{L}{2}\lambda_k^2\Vert d^{PW}(x^k)\Vert^2.
	\end{eqnarray*}
	Hence we have that
	\begin{eqnarray*}
		\sigma(x^{k+1},x^*)&\leq& \sigma(x^k,x^*)-\lambda_k\theta^{PW}(x^k)+\frac{L}{2}\lambda_k^2\Vert d^{PW}(x^k)\Vert^2\\
		&=&\sigma(x^k,x^*)-\frac{\theta^{PW}(x^k)^2}{2L\Vert d^{PW}(x^k)\Vert^2}
		\leq \sigma(x^k,x^*)-\frac{2\mu\xi^{*2}}{2LD^2 m_1^2\phi^{*2}}\sigma(x^k,x^*).
	\end{eqnarray*}
	We get the conclusion.
\end{proof}
\section{Numberical Experiments}\label{sec:5}
This section we present some numberical experiments illustrating the good performance of DIPFWMOP. As a contrast, We consider Frank-Wolfe algorithm for multiobjective optimization\cite{assunccao2021conditional}, Away-step Frank-Wolfe algorithm for multiobjective optimization\cite{gonccalves2024away} and Projected Gradient method for multiobjective optimization\cite{drummond2004projected}. For simplicity, we call them as FW, AFW, DIPFW and PG in the follows. All of numberical experiments are implemented in PYTHON.

We have described the subproblem and the adaptive step size for FW, AFW and DIPFW in sect.\ref{sec:3}. It should be noted that adaptive step size may fail when accuracy is required. Hence the smoothness parameter each cycle is multiplied by a parameter $\alpha=0.5$. Now we will briefly describe PG. The subproblem of PG at each iteration is
\begin{eqnarray}
	\min_{u\in \mathcal{P}}\max_{j\in [m]}\langle \nabla f_j(x^k),u-x^k\rangle+\frac{1}{2}\Vert u-x^k\Vert^2.
\end{eqnarray}
It is a quadratic problem, so we use cvxpy to solve it. The stopping criterion we use is $\frac{1}{2}\Vert p(x^k)-x^k\Vert^2<\epsilon$ where $p(x^k)$ is the solution of subproblem of PG and $\epsilon>0$ is the tolerance. The step size we choose is the armijo line search.
We consider the following multiobjective optimization problem
\begin{eqnarray}
	&\min_{x\in \mathbb{R}^n}&F(x)=(\frac{1}{2}\Vert  y-b_1\Vert^2,...,\frac{1}{2}\Vert  y-b_m\Vert^2)^\top,\nonumber\\
	&s.t.&\ y=Ax,\\
	&&x\geq 0,e^\top x=1,\nonumber
\end{eqnarray}
where $e=(1,1,...,1)^\top\in R^{m}$, $A=(a_1,...,a_n)\in R^{p\times n}$, and $b_j\in R^p,j\in [m]$. $a_i$ of $A$ and $b_j,j\in [m]$ are selected ramdomly in $[0,1]^p$. We let $p=10,20,50$, $n=10$ and $m=2,3$. The tolerance $\epsilon$ is $10^{-4}$.

Tab.\ref{tab:1} lists the time and iteration result which four algorithms have run. From Fig.\ref{fig:1}, we know that DIPFW and PG is better than FW and AFW in most text problems. And DIPFW is faster than PG in time.

In Fig.\ref{fig:1}, we plot four algorithms' performance profile for time and iteration which is proposed in \cite{dolan2002benchmarking}. Fig.\ref{fig:1} similarly illustrates DIPFW is better than others.

\begin{table}
	\centering
	\vskip-0.1in
	\begin{tabular}{cccccccc}
		\hline
		&    &    &  time  &   &    &  iteration  &  \\
		\hline
		Method  &  Dim  &  Min  &  Mean  &  Max &  Min  &  Mean  &  Max\\
		\hline
		
		FW & (10,10,2) & 3.172 & 4.697 & 7.343 & 1457 & 2082.3 & 2923\\
		AFW & (10,10,2) & 3.234 & 4.784 & 6.672 & 1419 & 2074.9 & 2846\\
		DIPFW & (10,10,2) & 0.078 & 1.109 & 0.141 & 22 & 35.2 & 48\\
		PG & (10,10,2) & 0.156 & 0.220 & 0.281 & 21 & 27.8 & 37\\
		FW & (10,10,3) & 0.031 & 1.288 & 2.047 & 9 & 343 & 549\\
		AFW & (10,10,3) & 0.031 & 0.816 & 1.312 & 8 & 229.9 & 338\\
		DIPFW & (10,10,3) & 0.031 & 0.053 & 0.063 & 10 & 17.5 & 22\\
		PG & (10,10,3) & 0.094 & 0.159 & 0.281 & 13 & 19.1 & 32\\
		FW & (10,20,2) & 3.563 & 3.713 & 3.891 & 1715 & 1763.7 & 1797\\
		AFW & (10,20,2) & 0.078 & 0.097 & 0.125 & 38 & 44.8 & 59\\
		DIPFW & (10,20,2) & 0.047 & 0.061 & 0.078 & 17 & 22.6 & 27\\
		PG & (10,20,2) & 0.203 & 0.278 & 0.328 & 22 & 33.2 & 39\\
		FW & (10,20,3) & 0.156 & 0.530 & 1.344 & 60 & 235.2 & 596\\
		AFW & (10,20,3) & 0.125 & 0.541 & 1.375 & 59 & 230.3 & 582\\
		DIPFW & (10,20,3) & 0.047 & 0.053 & 0.063 & 15 & 18.8 & 23\\
		PG & (10,20,3) & 0.125 & 0.161 & 0.188 & 16 & 19.2 & 23\\
		FW & (10,50,2) & 0.469 & 3.958 & 16.063 & 208 & 1775.5 & 7228\\
		AFW & (10,50,2) & 0.5 & 4.233 & 18 & 207 & 1774.5 & 7227\\
		DIPFW & (10,50,2) & 0.094 & 0.114 & 0.141 & 27 & 33.9 & 41\\
		PG & (10,50,2) & 0.281 & 0.327 & 0.406 & 36 & 41.3 & 51\\
		FW & (10,50,3) & 0.219 & 0.653 & 0.969 & 96 & 274.7 & 418\\
		AFW & (10,50,3) & 0.234 & 0.673 & 1 & 95 & 273.7 & 417\\
		DIPFW & (10,50,3) & 0.063 & 0.066 & 0.781 & 21 & 24.2 & 28\\
		PG & (10,50,3) & 0.297 & 0.353 & 0.391 & 39 & 42.6 & 46\\
		
		\hline
	\end{tabular}
	\caption{time and iteration}
	\label{tab:1}
\end{table}

\begin{figure}[h]
	\centering
	\begin{subfigure}[b]{0.45\textwidth}
		\includegraphics[width=\textwidth]{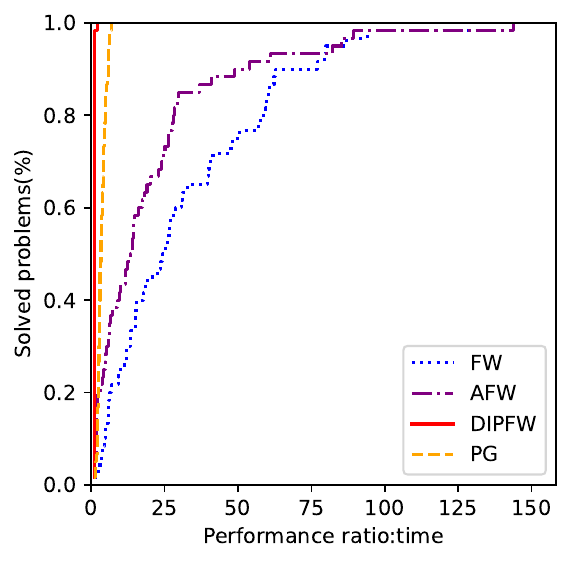}
		\caption{time}
		\label{}
	\end{subfigure}
	\hfill
	\begin{subfigure}[b]{0.45\textwidth}
		\includegraphics[width=\textwidth]{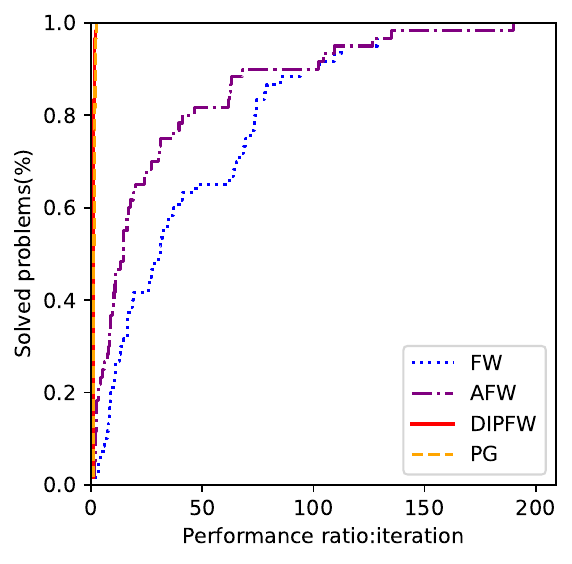}
		\caption{iteration}
		\label{}
	\end{subfigure}
	\caption{time(left) and iteration(right).}
	\label{fig:1}
\end{figure}

\section{Conclusion}\label{sec:6}
In this paper, we propose decomposition-invariant pairwise Frank-Wolfe algorithm for constrained multiobjective optimization. This algorithm solve constrained multiobjective optimization problems whose feasible region is an arbitrary bounded polytope. The descent direction of the algorithm combines pairwise direction. We establish asymptotic convergence analysis and linear convergence rate if objective functions are strongly convex.

In future work, we will extend this algorithm to the vector optimization and analysis its convergence.

\bibliographystyle{plain}
\bibliography{ref}

\end{document}